\title[Sufficient conditions for additivity]{Sufficient conditions for Strassen's additivity conjecture}
\date{\today}
\author{Zach Teitler}
\address{Zach Teitler\\
1910 University Drive\\
Department of Mathematics\\
Boise State University\\
Boise, ID 83725-1555 \ USA}
\email{zteitler@boisestate.edu}
\thanks{This work was supported by a grant from the Simons Foundation (\#354574, Zach Teitler).}
\keywords{Waring rank, Strassen's additivity conjecture}
\subjclass[2010]{15A21, 15A69, 14N15}
\newtheorem{theorem}{Theorem}[section]
\newtheorem{maintheorem}{Theorem}
\newtheorem{proposition}[theorem]{Proposition}
\newtheorem{corollary}[theorem]{Corollary}
\theoremstyle{definition}
\newtheorem{definition}[theorem]{Definition}
\newtheorem{question}[maintheorem]{Question}
\theoremstyle{remark}
\newtheorem{remark}[theorem]{Remark}
\newtheorem{example}[theorem]{Example}
\newcommand{\bx}{\mathbf{x}}
\newcommand{\bpartial}{\pmb{\partial}}
\newcommand{\bbC}{\mathbb{C}}
\newcommand{\bbP}{\mathbb{P}}
\DeclareMathOperator{\rk}{rk}
\DeclareMathOperator{\brk}{brk}
\DeclareMathOperator{\crk}{crk}
\DeclareMathOperator{\Derivs}{Derivs}
\DeclareMathOperator{\codim}{codim}
\DeclareMathOperator{\mult}{mult}
\DeclareMathOperator{\Span}{span}
\DeclareMathOperator{\minrank}{minrank}
\DeclareMathOperator{\Zeros}{Zeros}
\newcommand{\defining}[1]{\emph{#1}}
\begin{document}

\begin{abstract}
We give a sufficient condition for the strong symmetric version of Strassen's additivity conjecture:
the Waring rank of a sum of forms in independent variables is the sum of their ranks,
and every Waring decomposition of the sum is a sum of decompositions of the summands.
We give additional sufficient criteria for the additivity of Waring ranks
and a sufficient criterion for additivity of cactus ranks and decompositions.
\end{abstract}

\maketitle

The \defining{Waring rank} $\rk(F)$ of a homogeneous polynomial $F$ of degree $d$ is the least number of terms $r$
in an expression for $F$ as a linear combination of $d$th powers of linear forms, $F = c_1 \ell_1^d + \dotsb + c_r \ell_r^d$.
Higher Waring rank corresponds to higher complexity.
For general introductions see, for example, \cite{Geramita}, \cite{MR1735271}, \cite{Teitler:2014gf};
for discussion of applications see, for example, \cite{MR2447451}, \cite{MR2865915}.
We work over $\bbC$, that is, our homogeneous polynomials have complex coefficients.
Since we work over $\bbC$ we may rescale each $\ell_i$ to make the scalars $c_i$ unnecessary.
An expression $F = \ell_1^d + \dotsb + \ell_r^d$ is called a \defining{power sum decomposition} of $F$.
A power sum decomposition with $r = \rk(F)$ is called a \defining{Waring decomposition}.
Homogeneous polynomials of degree $d$ are henceforth called \defining{forms of degree $d$},
or briefly \defining{$d$-forms}.

We write $F(\bx)$ for a polynomial in the tuple of variables $\bx = (x_1,\allowbreak \dotsc, \allowbreak x_n)$.
Suppose that $F_1(\bx_1), \dotsc, F_k(\bx_k)$ are forms of degree $d$ in independent tuples $\bx_1,\dotsc,\bx_k$
and $F(\bx_1,\dotsc,\bx_k) = F_1(\bx_1) + \dotsb + F_k(\bx_k)$.
Then clearly $\rk(F) \leq \rk(F_1) + \dotsb + \rk(F_k)$,
as adding together Waring decompositions (minimal power sum decompositions) of the $F_i$'s gives a power sum decomposition of $F$.
\begin{question}\label{q: rank additive}
With notation as above, is $\rk(F) = \rk(F_1) + \dotsb + \rk(F_k)$?
\end{question}
\noindent
We give sufficient conditions for this equality to hold.

Strassen's additivity conjecture \cite{MR0521168} asserts a similar statement for tensors.
Waring rank of homogeneous forms is equivalent to symmetric rank of symmetric tensors.
So this question is asking for a symmetric version of Strassen's additivity conjecture.

One might ask if, further,
every Waring decomposition for $F$ is a sum of Waring decompositions of the $F_i$,
each using only the variables $\bx_i$.
Explicitly, in a Waring decomposition $F = \sum \ell_j^d$,
we ask whether it is the case that
each linear form $\ell_j$ involves variables from only one tuple $\bx_i$.
If this holds, the $\ell_j^d$ that involve variables from $\bx_i$ must necessarily sum to $F_i$.
It fails for quadratic forms---for example, $x^2+y^2 = (cx+sy)^2+(sx-cy)^2$ whenever $c^2+s^2=1$---but
the question remains open for higher degree forms.

\begin{question}\label{q: decompositions additive}
With notation as above, is every Waring decomposition of $F$
given by a sum of Waring decompositions of the $F_i$'s?
\end{question}

The symmetric version of Strassen's additivity conjecture has recently been shown to hold in a number of cases.
When all of the $F_i$ are monomials a positive answer to Question~\ref{q: rank additive} is given in~\cite{MR2966824}.
When all but one of the $F_i$ has the simple form $x_i^d$,
or when the number of summands is $k=2$ and each $F_1, F_2$ is a binary form,
Question~\ref{q: rank additive} has a positive answer \cite{MR3320211}.
These cases and many more are treated in a more uniform manner in \cite{MR3783793},
which introduces the notion of ``$e$-computability'' and shows it provides a sufficient condition for
Question~\ref{q: rank additive} to have a positive answer.
Very recently a positive answer to Question~\ref{q: decompositions additive} for certain forms $F_i$
has been given in \cite[Theorem 4.6, Theorem 5.1]{MR3658727}.

We give a simple sufficient condition for the additivity of Waring rank and Waring decompositions to hold,
that is, for Question~\ref{q: rank additive} and Question~\ref{q: decompositions additive} to both have positive answers.
The condition is surprisingly simple: it is just that equality should hold in a certain well-known lower bound for Waring rank,
the catalecticant bound.

The catalecticant bound for Waring rank has been known since the 19th century, see discussion and a precise statement below.
However it seems to never have been noticed that when equality holds, a consequence is additivity of ranks as in Strassen's conjecture.

Equality in the catalecticant bound is a special condition,
but it holds for various classes of forms, including general binary forms,
general forms of low rank, general quartic forms in $n \leq 6$ variables,
and general plane conics, quartics, sextics, and octics; see \textsection\ref{section: catalecticant bound}.

Our first main result, involving Waring rank and the catalecticant bound, is described in Section~\ref{section: additivity of Waring rank}.
The underlying idea is very simple and yields a number of immediate generalizations, described in the subsequent sections.
In Section~\ref{section: further bounds} we consider other bounds for Waring rank, obtaining additional sufficient conditions
for a positive answer to Question~\ref{q: rank additive}.
In particular we describe in some detail a very recently discovered lower bound due to Carlini--Guo--Ventura.
In Section~\ref{section: cactus rank} we return to the catalecticant bound, which is also a bound for cactus rank.
We obtain a sufficient condition for a positive answer to the analogues
of Question~\ref{q: rank additive} and Question~\ref{q: decompositions additive} for cactus rank.

The Waring ranks of monomials and sums of monomials in independent variables are known \cite{MR2966824},
cactus ranks of monomials are known \cite{MR2842085},
and border ranks of monomials have been found recently as well \cite{Oeding:2016kl}.
However cactus ranks and border ranks of sums of monomials in independent variables are not known.
We determine these in special cases
in Example \ref{example: concentrated monomials cactus rank} and Example \ref{example: concentrated monomials border rank}.

\section{Additivity of Waring rank}\label{section: additivity of Waring rank}

The general strategy for giving a positive answer to Question~\ref{q: rank additive} is very simple.
Suppose that $A$ and $B$ are functions of homogeneous forms satisfying the following conditions:
\begin{enumerate}
\item $A(F) \geq B(F)$ for every $F$,
\item $A$ is subadditive, meaning $A(F+G) \leq A(F) + A(G)$ for all $d$-forms $F$ and $G$, and
\item $B$ is additive on forms in independent variables, meaning $B(F+G) = B(F)+B(G)$ when $F$ and $G$ are $d$-forms in independent variables.
\end{enumerate}
Given all this, when $F_1,\dotsc,F_k$ are $d$-forms in independent variables such that $A(F_i) = B(F_i)$ for each $i$,
then $A$ is also additive on the $F_i$: $A(\sum F_i) = \sum A(F_i)$.

The notion of ``$e$-computability'' fits into this framework, see \cite[Corollary 3.4, Definition 3.5]{MR3783793}.

In this section $A$ is Waring rank and $B$ is the catalecticant bound, described next.
In Section~\ref{section: further bounds} we keep Waring rank as $A$, and vary $B$.
In Section~\ref{section: cactus rank} we return to the catalecticant bound for $B$, and change $A$ to cactus rank.

\begin{remark}
See \cite{Teitler:2014gf} for an exposition of lower bounds for generalized ranks,
from which the interested reader may formulate sufficient conditions for Strassen-like additivity results
in many more cases.
\end{remark}

\subsection{Catalecticant bound}\label{section: catalecticant bound}

For a form $F$ of degree $d$,
we denote by $\Derivs(F)$ the vector space spanned by the partial derivatives of $F$ of all orders,
including $F$ itself.
It is a graded finite-dimensional vector space;
let $\Derivs(F)_a$ denote the $a$th graded piece, that is, the space of $a$-forms spanned by the $(d-a)$th derivatives of $F$.
Recall the following very well-known bound for Waring rank:
\begin{proposition}[Classical]
For every $a$, $0 \leq a \leq d$, we have $\rk(F) \geq \dim \Derivs(F)_a$.
\end{proposition}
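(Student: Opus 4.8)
The plan is to exhibit an explicit $r$-dimensional space containing $\Derivs(F)_a$, where $r = \rk(F)$. First I would fix a Waring decomposition $F = \ell_1^d + \dotsb + \ell_r^d$ realizing the rank. By the definition recalled above, $\Derivs(F)_a$ is spanned by the order-$(d-a)$ partial derivatives of $F$, so by linearity of differentiation it suffices to control the order-$(d-a)$ derivatives of each summand $\ell_i^d$ separately.

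The key step is the elementary observation that every order-$(d-a)$ partial derivative of a power $\ell^d$ of a linear form is a scalar multiple of $\ell^a$. Writing $\ell = c_1 x_1 + \dotsb + c_n x_n$, one has $\partial_{x_j}(\ell^m) = m c_j \ell^{m-1}$, so iterating $d-a$ differentiations produces a constant (a product of the $c_j$ and falling-factorial factors) times $\ell^a$. Consequently every order-$(d-a)$ derivative of $F = \sum_i \ell_i^d$ lies in $\Span\{\ell_1^a, \dotsc, \ell_r^a\}$, and hence $\Derivs(F)_a \subseteq \Span\{\ell_1^a, \dotsc, \ell_r^a\}$.

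I would conclude by observing that the latter span is generated by $r$ elements, so its dimension is at most $r$; thus $\dim \Derivs(F)_a \leq r = \rk(F)$, as claimed. There is no genuine obstacle here: the entire argument rests on the single fact that differentiating a power of a linear form returns a proportional power of the same linear form, together with the bookkeeping that only derivatives of the one fixed order $d-a$ contribute to the degree-$a$ graded piece. (Equivalently, $\dim \Derivs(F)_a$ is the rank of the $a$th catalecticant matrix of $F$, and the argument above is exactly the statement that this rank is bounded by the number of summands in any power sum decomposition.)
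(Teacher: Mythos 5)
Your proof is correct and matches the paper's argument essentially verbatim: both fix a Waring decomposition $F = \sum \ell_i^d$, observe that differentiation sends $\ell_i^d$ to scalar multiples of lower powers of $\ell_i$ (the paper phrases this as a single differentiation step plus induction, you as the iterated order-$(d-a)$ derivative), and conclude $\Derivs(F)_a \subseteq \Span\{\ell_1^a,\dotsc,\ell_r^a\}$, whence $\dim \Derivs(F)_a \leq r$. No gaps and no meaningful difference in route.
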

\noindent
Indeed, if $F = \sum c_i \ell_i^d$, then $\partial F / \partial x_j = \sum d c_i m_{i,j} \ell_i^{d-1}$ for some constants $m_{i,j}$,
explicitly $m_{i,j}$ is the coefficient of $x_j$ in $\ell_i$.
By induction $\Derivs(F)_a$ is contained in the span of $\ell_1^a,\dotsc,\ell_r^a$, so has dimension at most $r$.
This bound dates back to the 19th century; it is sometimes called the catalecticant bound for Waring rank.
For more on this, including the name ``catalecticant'',
see, for example, \cite[\textsection 2.1]{Teitler:2014gf}, \cite[pg.~49--50]{MR1096187}, \cite[Lecture~11]{Geramita},
\cite{Miller:2013lo}.

We are interested in the case that equality holds.
Equality in the above bound is a fairly special condition, but it does occur.
We do not classify all cases where equality holds, but we list several here.

\begin{example}
Fix $n$ and $d$.
For $1 \leq r \leq n$ the rank-$r$ Fermat-type polynomial $F = x_1^d + \dotsb + x_r^d$
has rank $\rk(F) = r = \dim \Derivs(F)_a$ for all $1 \leq a \leq d-1$.
In particular this holds for every quadratic form for $a = 1$.
\end{example}

\begin{example}
By well-known facts about binary forms (see, for example, \cite[\textsection1.3]{MR1735271})
if $F = F(x,y)$ is a binary form of degree $d$ and rank $r = \rk(F) \leq \left\lfloor \frac{d+2}{2} \right\rfloor$,
then $\rk(F) = \dim \Derivs(F)_a$ for $r-1 \leq a \leq d+1-r$.
In particular, if $F = F(x,y)$ is a general binary form of degree $d$ then
$\rk(F) = \left\lfloor \frac{d+2}{2} \right\rfloor = \dim \Derivs(F)_{\left\lfloor \frac{d}{2} \right\rfloor}$.
\end{example}

\begin{remark}
We say that a statement holds for a \defining{general} form of some type if the statement holds for every element in
a Zariski open and dense subset of the set of forms of that type.

For example, a general form of degree $d$ in $n$ variables $x_1,\dotsc,x_n$
is an element of a Zariski open and dense subset of the vector space
parametrizing all forms of degree $d$ in the $n$ variables given.
It is equivalent to say that the form has general coefficients.
\end{remark}

Recall that if $\ell_1,\dotsc,\ell_r$ are general linear forms in $n$ variables
and $F = \ell_1^d + \dotsb + \ell_r^d$ then for $0 \leq a \leq d$ we have
\[
  \dim \Derivs(F)_a = \min\left\{ \binom{n+a-1}{n-1} , \binom{n+d-a-1}{n-1} , r \right\},
\]
see \cite[Lemma 1.17]{MR1735271}.
(Here, \defining{general} means that the $r$-tuple $(\ell_1,\dotsc,\ell_r)$ is an element
of a Zariski open and dense subset of the $r$th Cartesian power of the vector space of linear forms.)
\begin{proposition}
Fix integers $n, d, a$ with $1 \leq a \leq d-1$.
Let $r$ be an integer, $1 \leq r \leq \min\left\{ \binom{n+a-1}{n-1} , \binom{n+d-a-1}{n-1} \right\}$.
Let $F$ be a general form of rank $r$, of degree $d$ in $n$ variables.
Then $\rk(F) = \dim \Derivs(F)_a$.
\end{proposition}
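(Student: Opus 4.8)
The plan is to combine the classical catalecticant lower bound with the dimension formula quoted immediately before the statement. Since $F$ has rank $r$ by hypothesis, it suffices to prove $\dim \Derivs(F)_a = r$; together with $\rk(F) = r$ this gives the asserted equality. The catalecticant bound already supplies one inequality, $\rk(F) \geq \dim \Derivs(F)_a$, so the real content is to show that $\dim \Derivs(F)_a$ is as large as possible, namely $r$, for a general $F$ of rank $r$.

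First I would identify a general form of rank $r$ with a general point of the $r$th secant variety of the Veronese variety. Concretely, the parametrization $(\ell_1,\dotsc,\ell_r) \mapsto \ell_1^d + \dotsb + \ell_r^d$ is dominant onto the locus of forms of rank at most $r$, so a general form of rank $r$ may be written $F = \ell_1^d + \dotsb + \ell_r^d$ with $\ell_1,\dotsc,\ell_r$ general linear forms. For such $F$ the quoted formula \cite[Lemma 1.17]{MR1735271} gives
\[
  \dim \Derivs(F)_a = \min\left\{ \binom{n+a-1}{n-1}, \binom{n+d-a-1}{n-1}, r \right\},
\]
and the hypothesis $r \leq \min\left\{ \binom{n+a-1}{n-1}, \binom{n+d-a-1}{n-1} \right\}$ forces the minimum to equal $r$. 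Hence $\dim \Derivs(F)_a = r$ for general $F$ of rank $r$, which combined with $\rk(F) = r$ yields $\rk(F) = \dim \Derivs(F)_a$.

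To make the passage from ``general $\ell_i$'' to ``general $F$ of rank $r$'' precise, I would invoke semicontinuity: $\dim \Derivs(F)_a$ is the rank of the $a$th catalecticant matrix, whose entries are linear in the coefficients of $F$, so the locus where this rank is at least any fixed value is open. The catalecticant bound shows $\dim \Derivs(F)_a \leq \rk(F) \leq r$ on the whole secant variety, while the formula shows the value $r$ is attained on the dense set of forms $\sum \ell_i^d$ with $\ell_i$ general; therefore the generic value on the secant variety is exactly $r$. The same computation incidentally confirms that a general point of the secant variety has rank exactly $r$, since the bound gives $\rk(F) \geq r$ while the decomposition gives $\rk(F) \leq r$, so no separate appeal to a generic-rank estimate is needed.

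I expect the only delicate point to be this semicontinuity and density argument identifying the generic behavior on the secant variety with the behavior for general $\ell_i$; everything else is a direct substitution into the quoted formula, with the constraint on $r$ used solely to clear the two binomial terms out of the minimum.
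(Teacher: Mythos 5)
Your proof is correct and matches the paper's (implicit) argument: the proposition is stated there with a \qed and no written proof precisely because, as you observe, a general form of rank $r$ may be taken to be $\ell_1^d + \dotsb + \ell_r^d$ with $\ell_1,\dotsc,\ell_r$ general, and substituting the hypothesis $r \leq \min\left\{ \binom{n+a-1}{n-1}, \binom{n+d-a-1}{n-1} \right\}$ into the quoted formula of \cite[Lemma 1.17]{MR1735271} immediately gives $\dim \Derivs(F)_a = r = \rk(F)$. One minor slip in your auxiliary semicontinuity discussion: the claim $\rk(F) \leq r$ on the whole secant variety is false in general (Waring rank is not upper semicontinuous on the boundary --- e.g.\ $x^{d-1}y$ has border rank $2$ but rank $d$), though the bound you actually need there, $\dim \Derivs(F)_a \leq r$, still holds since the locus where the catalecticant matrix has rank at most $r$ is closed, and in any case the proposition concerns only a general point, so this does not affect the validity of your proof.
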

\begin{proof}
Immediate.
\end{proof}

Recall the Alexander--Hirschowitz theorem (see, for example, \cite[Corollary 1.62]{MR1735271}):
If $F$ is a general form of degree $d$ in $n$ variables then $\rk(F) = \lceil \frac{1}{n} \binom{n+d-1}{n-1} \rceil$,
with the following exceptions.
If $d=2$, then $\rk(F) = n$ (instead of $\lceil (n+1)/2 \rceil$);
if $(n,d) = (3,4), (4,4), (5,4), (5,3)$ then $\rk(F) = 6$, $10$, $15$, $8$, respectively (instead of $5$, $9$, $14$, $7$, respectively).

And recall that if $F$ is a general form of degree $d$ in $n$ variables
then for $0 \leq a \leq d$ we have
\[
  \dim \Derivs(F)_a = \min\left\{ \binom{n+a-1}{n-1} , \binom{n+d-a-1}{n-1} \right\} ,
\]
see \cite[Proposition 3.12]{MR1735271}.

\begin{example}
Equality holds in the catalecticant bound with $a=2$ for general quartics in $n \leq 6$ variables.
That is, if $F$ is a general form of degree $4$ in $n \leq 6$ variables, then $\rk(F) = \dim \Derivs(F)_2$.
See \cite{MR1201387} for very interesting geometry arising from Waring decompositions of general quartics in $3$ variables.

Note that the rank of a general form of degree $4$ in $n \geq 7$ variables is strictly greater than $\dim \Derivs(F)_2$.
For $n=7$, the rank of a general $4$-form is $30$ while the dimension of the space of derivatives is $28$.
\end{example}

\begin{example}
Equality holds in the catalecticant bound for general plane conics, quartics, sextics, and octics.
That is, if $F$ is a general form in $n=3$ variables of even degree $d = 2a$, $1 \leq a \leq 4$,
then $\rk(F) = \dim \Derivs(F)_a$.
(The conic case follows by the earlier example on Fermat-type and quadratic forms,
and the quartic case is already in the previous example.)
See \cite{MR1201387}, \cite[Theorem 1.7]{MR1780430} for more on the geometry of Waring decompositions of these curves.
\end{example}

\subsection{Catalecticant condition for additivity of Waring rank}
Now we prove our first main result, showing that
Questions \ref{q: rank additive} and \ref{q: decompositions additive} both have positive answers
when the bounds $\rk(F_i) \geq \dim \Derivs(F_i)_a$ are in fact equalities.

\begin{theorem}\label{thm: waring strassen additivity catalecticant bound}
Let $F_1(\bx_1),\dotsc,F_k(\bx_k)$ be homogeneous forms of degree $d$ in independent tuples of variables $\bx_1,\dotsc,\bx_k$
and let $F = F_1 + \dotsb + F_k$.
Suppose there is an $a$, $1 \leq a \leq d-1$, such that for each $i$, $1 \leq i \leq k$,
$\rk(F_i) = \dim \Derivs(F_i)_a$.
Then $\rk(F) = \dim \Derivs(F)_a = \rk(F_1) + \dotsb + \rk(F_k)$.

If in addition $a \geq 2$ then furthermore every Waring decomposition of $F$ is a sum of Waring decompositions of the $F_i$'s.
That is, if $F = \ell_1^d + \dotsb + \ell_r^d$ with $r=\rk(F)$, then
each $\ell_j$ only involves variables from one tuple $\bx_i$,
and the $\ell_j$ that involve variables from $\bx_i$ give a Waring decomposition of $F_i$.
\end{theorem}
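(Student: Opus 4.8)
The first statement is an instance of the general framework of this section, so the work is to verify its hypotheses. First I would check that the catalecticant dimension $\dim\Derivs(\cdot)_a$ is additive on forms in independent variables. The point is that a pure $(d-a)$th order partial derivative operator in the variables of a single tuple $\bx_i$ annihilates every $F_j$ with $j\neq i$ and carries $F_i$ into $\bbC[\bx_i]_a$, whereas any operator mixing variables from two different tuples annihilates $F$ altogether. Hence $\Derivs(F)_a=\bigoplus_i\Derivs(F_i)_a$, the sum being direct because its summands lie in the independent subspaces $\bbC[\bx_i]_a$; in particular $\dim\Derivs(F)_a=\sum_i\dim\Derivs(F_i)_a$. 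Combining the catalecticant bound, this additivity, the hypothesis $\rk(F_i)=\dim\Derivs(F_i)_a$, and subadditivity of Waring rank yields
\[ \rk(F)\ge\dim\Derivs(F)_a=\sum_i\dim\Derivs(F_i)_a=\sum_i\rk(F_i)\ge\rk(F), \]
so every inequality is an equality, proving the first statement.

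For the decomposition statement, fix a Waring decomposition $F=\ell_1^d+\dotsb+\ell_r^d$ with $r=\rk(F)$. The proof of the catalecticant bound shows $\Derivs(F)_a\subseteq\Span(\ell_1^a,\dotsc,\ell_r^a)$; since $\dim\Derivs(F)_a=r$ while the right-hand span has dimension at most $r$, this inclusion is forced to be an equality. Therefore each $\ell_j^a$ lies in $\Derivs(F)_a=\bigoplus_i\Derivs(F_i)_a\subseteq\bigoplus_i\bbC[\bx_i]_a$; that is, $\ell_j^a$ contains no \emph{mixed} monomial involving variables from two different tuples.

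The crux, and the only genuinely non-formal step, is the following elementary claim, in which the hypothesis $a\ge 2$ enters: if a linear form $\ell$ is such that $\ell^a$ has no mixed monomials, then $\ell$ involves variables from only one tuple. To prove it I would write $\ell=\ell_i+\ell'$, where $\ell_i$ collects the $\bx_i$-variables and $\ell'$ the rest, grade $\bbC[\bx]$ by degree in the $\bx_i$-variables, and expand $\ell^a=\sum_{t=0}^{a}\binom{a}{t}\ell_i^t(\ell')^{a-t}$. The summand $\ell_i^t(\ell')^{a-t}$ is homogeneous of $\bx_i$-degree exactly $t$, so distinct values of $t$ cannot cancel one another. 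The term with $t=a-1$ is $a\,\ell_i^{a-1}\ell'$; if both $\ell_i\neq 0$ and $\ell'\neq 0$ this is a nonzero product of a form of $\bx_i$-degree $a-1\ge 1$ with a nonzero form in the remaining variables, hence contains mixed monomials that survive uncancelled---a contradiction. This is exactly where $a\ge 2$ is used, and for $a=1$ the claim genuinely fails, consistent with the quadratic counterexample in the introduction. Applying the claim to each $\ell_j$ (which is nonzero, by minimality of $r$) shows every $\ell_j$ involves a single tuple.

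Finally I would group the decomposition by tuple, writing $F=\sum_i G_i$ where $G_i$ is the sum of those $\ell_j^d$ involving $\bx_i$. Each $G_i\in\bbC[\bx_i]$, and since the decomposition of $F$ into parts supported on the independent tuples is unique, $G_i=F_i$ for every $i$. Letting $r_i$ be the number of summands in $G_i$, we have $r_i\ge\rk(F_i)$, while $\sum_i r_i=r=\sum_i\rk(F_i)$ forces $r_i=\rk(F_i)$; thus each grouping is in fact a Waring decomposition of $F_i$, as claimed. Everything outside the mixed-monomial claim is bookkeeping within the subadditive/additive framework of the section.
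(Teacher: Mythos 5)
Your proof is correct and takes essentially the same route as the paper's: the same squeeze $\rk(F)\geq\dim\Derivs(F)_a=\sum_i\dim\Derivs(F_i)_a=\sum_i\rk(F_i)\geq\rk(F)$ for the first part, and the same dimension count forcing each $\ell_j^a$ into $\Derivs(F)_a=\bigoplus_i\Derivs(F_i)_a$, hence free of mixed monomials, for the second. You merely spell out steps the paper cites or asserts without detail---the direct-sum decomposition of $\Derivs(F)_a$, the binomial-expansion verification that $a\geq 2$ forces each $\ell_j$ into a single tuple, and the closing count showing each grouping is itself a Waring decomposition---all of which are sound.
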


\begin{proof}
Observe that $\Derivs(F)_a = \Derivs(F_1)_a \oplus \dotsb \oplus \Derivs(F_k)_a$.
(See, for example, \cite[\textsection 2.8]{MR3426613}.)
We have
\[
\begin{split}
  \rk(F) &\geq \dim \Derivs(F_1+\dotsb+F_k)_a \\
    &= \sum_{i=1}^k \dim \Derivs(F_i)_a \\
    &= \sum_{i=1}^k \rk(F_i) \\
    &\geq \rk(F_1+\dotsb+F_k).
\end{split}
\]
This shows that $\rk(F)$ is equal to $\dim \Derivs(F)_a$ and to the sum of the $\rk(F_i)$.

Next suppose $F = \sum_{j=1}^r \ell_j^d$.
The space $\Derivs(F)_a$ is contained in the span of the $\ell_j^a$,
and since $\Derivs(F)_a$ has dimension equal to the number $r$ of the $\ell_j^a$,
it follows that for every $1 \leq j \leq r$, $\ell_j^a \in \Derivs(F)_a$.

Since $\Derivs(F)_a$ is the direct sum of the subspaces $\Derivs(F_i)_a$,
and all the forms in $\Derivs(F_i)_a$ involve only the variables in the tuple $\bx_i$,
the forms in $\Derivs(F)_a$ have no mixed terms involving variables from more than one tuple.
It follows that each $\ell_j$ involves variables from at most one tuple, otherwise, for $a \geq 2$, $\ell_j^a$ would have mixed terms.
\end{proof}

\begin{corollary}
Let $d \geq 3$ and let $n_1,\dotsc,n_k$ be positive integers,
$\bx_1,\dotsc,\bx_k$ independent tuples of variables with each $|\bx_i|=n_i$.
For each $i$, $1 \leq i \leq k$ let $F_i$ be a general $d$-form of rank at most $\binom{n_i + \lfloor d/2 \rfloor - 1}{n_i-1}$
in the variables $\bx_i$.
Then $\rk(F_1+\dotsb+F_k) = \sum \rk(F_i)$ and every
Waring decomposition of $F_1+\dotsb+F_k$ is a sum of Waring decompositions of the $F_i$.
\qed
\end{corollary}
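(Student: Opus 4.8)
The plan is to obtain the Corollary as an immediate application of the preceding Theorem: that Theorem reduces everything to exhibiting a single degree $a$ with $1 \le a \le d-1$ for which $\rk(F_i) = \dim \Derivs(F_i)_a$ holds for \emph{every} $i$, and with $a \ge 2$ if we also want the conclusion on decompositions. So the entire argument comes down to choosing $a$ correctly and checking that the hypothesis on the ranks is exactly the catalecticant threshold supplied by the earlier Proposition on general forms of prescribed rank.

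First I would set $a = \lceil d/2 \rceil$. For $d \ge 3$ this satisfies $2 \le a \le d-1$, so both conclusions of the Theorem will be available; the point to watch is that for $d = 3$ one must take the upper middle value $a = \lceil d/2 \rceil = 2$ rather than $\lfloor d/2 \rfloor = 1$, precisely so that $a \ge 2$. With this choice $d - a = \lfloor d/2 \rfloor \le a$, so the rank threshold in the Proposition is
\[
  \min\left\{ \binom{n_i + a - 1}{n_i - 1}, \binom{n_i + d - a - 1}{n_i - 1} \right\} = \binom{n_i + \lfloor d/2 \rfloor - 1}{n_i - 1},
\]
which is the bound assumed on $\rk(F_i)$ in the hypothesis. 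Applying that Proposition to each general $F_i$ in its own variable tuple $\bx_i$ then gives $\rk(F_i) = \dim \Derivs(F_i)_a$, establishing the hypothesis of the Theorem for the single common degree $a = \lceil d/2 \rceil$ and all $i$ at once.

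Finally I would invoke the Theorem with this $a$: it yields $\rk(F_1 + \dotsb + F_k) = \sum_i \rk(F_i)$, and since $a \ge 2$ it further gives that every Waring decomposition of $F_1 + \dotsb + F_k$ splits as a sum of Waring decompositions of the $F_i$. There is no genuine obstacle here beyond the bookkeeping; the only step requiring care is the choice $a = \lceil d/2 \rceil$ rather than the floor, which is what simultaneously guarantees $a \ge 2$ for all $d \ge 3$ and, via the identity $d - a = \lfloor d/2 \rfloor$, reproduces exactly the stated threshold $\binom{n_i + \lfloor d/2 \rfloor - 1}{n_i - 1}$.
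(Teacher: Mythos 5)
Your proposal is correct and coincides with the paper's own (implicit) argument: the corollary carries no written proof precisely because it is immediate from the preceding Theorem together with the Proposition on general forms of bounded rank, via the choice $a=\lceil d/2\rceil$, which for $d\ge 3$ satisfies $2\le a\le d-1$ and gives the threshold $\min\bigl\{\tbinom{n_i+a-1}{n_i-1},\tbinom{n_i+d-a-1}{n_i-1}\bigr\}=\tbinom{n_i+\lfloor d/2\rfloor-1}{n_i-1}$, exactly as you compute. The one point worth recording is that your threshold has lower index $n_i-1$ (equivalently $\lfloor d/2\rfloor$), whereas the statement as printed reads $\tbinom{n_i+\lfloor d/2\rfloor-1}{2}$; these agree only when $n_i=3$ or $d\in\{4,5\}$, so your step identifying your computed bound with ``the bound assumed in the hypothesis'' silently corrects what appears to be a typo in the statement rather than a gap in your argument (for instance, with $d=3$ and $n_i=4$ the printed bound $6$ exceeds the catalecticant threshold $\tbinom{4}{3}=4$, and the proof would not go through as printed).
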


Pedro Macias Marques and Elisa Postinghel very generously shared the following ideas with me.
The first statement is a partial converse to Theorem~\ref{thm: waring strassen additivity catalecticant bound}.
\begin{proposition}[Macias Marques, Postinghel]\label{proposition: converse}
Let $F = F_1 + \dotsb + F_k$
where $F_1(\bx_1),\dotsc,F_k(\bx_k)$ are homogeneous forms of degree $d$ in independent tuples of variables $\bx_1,\dotsc,\bx_k$.
Suppose there is an $a$, $2 \leq a \leq d-1$, such that $\rk(F) = \dim \Derivs(F)_a$.
Then for each $i$, $1 \leq i \leq k$, $\rk(F_i) = \dim \Derivs(F_i)_a$.
\end{proposition}
\begin{proof}%
Let $r = \rk(F) = \dim \Derivs(F)_a$ and let $F = \ell_1^d + \dotsb + \ell_r^d$ be a decomposition of length $r$.
As above, each $\ell_i^a \in \Derivs(F)_a = \bigoplus_{i=1}^k \Derivs(F_i)_a$.
Therefore each $\ell_i$ involves variables from only one tuple $\bx_j$,
and the decomposition of $F$ turns out to be a sum of decompositions of the $F_i$.
Then
\[
  \sum_{i=1}^k \rk(F_i) \leq \sum_{i=1}^k \dim \Derivs(F_i)_a = \dim \Derivs(F)_a = \rk (F) .
\]
Hence $\rk(F) = \sum_{i=1}^k \rk(F_i)$, and for each $i$, $\rk(F_i) = \dim \Derivs(F_i)_a$.
\end{proof}
Now we have the following extension of Theorem~\ref{thm: waring strassen additivity catalecticant bound}.
\begin{corollary}[Macias Marques, Postinghel]\label{corollary: extension}
Let $F = F_1 + \dotsb + F_k$
where $F_1(\bx_1),\dotsc,F_k(\bx_k)$ are homogeneous forms of degree $d$ in independent tuples of variables $\bx_1,\dotsc,\bx_k$.
Suppose there is an $a$, $2 \leq a \leq d-1$,
such that $\rk(F_1) = \dim \Derivs(F_1)_a + 1$ and, for $2 \leq i \leq k$, $\rk(F_i) = \dim \Derivs(F_i)$.
Then $\rk(F) = \dim \Derivs(F)_a + 1 = \rk(F_1) + \dotsb + \rk(F_k)$.
\end{corollary}
\begin{proof}
We have
\begin{multline*}
  \dim \Derivs(F)_a \leq \rk(F) \leq \rk(F_1) + \dotsb + \rk(F_k) \\
    = \left( \sum_{i=1}^k \dim \Derivs(F_i)_a \right) + 1 = \dim \Derivs(F)_a + 1.
\end{multline*}
The case $\rk(F) = \dim \Derivs(F)_a$ is ruled out since the converse given above would imply
$\rk(F_1) = \dim \Derivs(F_1)_a$.
So $\rk(F) > \dim \Derivs(F)_a$.
\end{proof}

\section{Further bounds for Waring rank}\label{section: further bounds}

We review a bound for Waring rank described in \cite{MR3506475}.
We show it is additive on forms in independent variables,
so we can use this bound to describe a sufficient condition for a positive answer to Question~\ref{q: rank additive}.
Then we recall a lower bound for Waring rank from \cite{MR2628829}
and observe that it is also additive on forms in independent variables.

Recall the following common notation (see, for example, \cite{MR2628829}, \cite{MR2279854}, \cite{Teitler:2014gf}):
Let $V$ be a finite dimensional vector space with basis $\bx = (x_1,\dotsc,x_n)$,
let $S = S(V) = \bbC[\bx]$ be the symmetric algebra on $V$ and the polynomial ring on $\bx$,
and let $T = S(V^*) = \bbC[\partial_1,\dotsc,\partial_n]$ be the symmetric algebra on $V^*$,
which we regard as a polynomial ring in the dual variables $\partial_i$.
We let $T$ act on $S$ by differentiation, each $\partial_i$ acting as $\frac{\partial}{\partial x_i}$.
A $d$-form $F \in S^d V = S_d$ defines a hypersurface in $\bbP T_1$ or for that matter a (honest) function
on the affine space $T_1$.
Let $F^\perp = \{ \Theta \in T : \Theta F = 0 \}$, the apolar (or annihilating) ideal of $F$.
Since $F$ is homogeneous, the ideal $F^\perp$ is homogeneous.
Note that $\Derivs(F) \cong T/F^\perp$ as a vector space and, with reversed grading, as a $T$-module.

\subsection{Affine subspace bound}

The following is the bound for Waring rank described by Carlini, Guo, and Ventura \cite{MR3506475}.
\begin{theorem}[\cite{MR3506475}]
Let $1 \leq p \leq n$ be an integer, $F$ be a form in $n+1$ variables $x_0,\dotsc,x_n$ and let $F_k$ denote $\partial F/\partial x_k$.
If for all $\lambda_k \in \mathbb{C}$, with $1 \leq k \leq p$,
we have $\rk(F_0 +  \sum_{k=1}^p \lambda_k F_k) \geq m$
and $F_1,F_2,\dotsc,F_p$ are linearly independent, then $\rk(F) \geq m + p$.
\end{theorem}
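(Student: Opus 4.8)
The plan is to fix a hypothetical Waring decomposition $F = \ell_1^d + \dotsb + \ell_r^d$ with $r = \rk(F)$ and deduce $r \geq m+p$ directly. Writing $\ell_i = \sum_{j=0}^n c_{i,j} x_j$, the computation $\partial \ell_i^d/\partial x_k = d\, c_{i,k}\, \ell_i^{d-1}$ shows that every member of the affine family in the hypothesis has the shape
\[
  F_0 + \sum_{k=1}^p \lambda_k F_k = d \sum_{i=1}^r \mu_i(\lambda)\, \ell_i^{d-1},
  \qquad \mu_i(\lambda) = c_{i,0} + \sum_{k=1}^p \lambda_k c_{i,k}.
\]
The right-hand side is a power sum in the $(d-1)$st powers $\ell_i^{d-1}$, so its rank is at most the number of indices $i$ with $\mu_i(\lambda) \neq 0$. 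First I would therefore try to choose $\lambda$ so that at least $p$ of the coefficients $\mu_i(\lambda)$ vanish: this forces $\rk(F_0 + \sum \lambda_k F_k) \leq r - p$, and combined with the standing hypothesis $\rk(F_0 + \sum \lambda_k F_k) \geq m$ it yields $r - p \geq m$, that is, $r \geq m+p$.

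Everything hinges on making $p$ of the $\mu_i$ vanish simultaneously, and this is exactly where linear independence of $F_1,\dotsc,F_p$ should enter. I would consider the $r \times p$ coefficient matrix $C = [c_{i,k}]_{1 \leq i \leq r,\, 1 \leq k \leq p}$ and argue it has full column rank $p$: a nonzero vector $(a_1,\dotsc,a_p)$ in its kernel would satisfy $\sum_k a_k c_{i,k} = 0$ for every $i$, whence $\sum_k a_k F_k = d \sum_i \bigl(\sum_k a_k c_{i,k}\bigr) \ell_i^{d-1} = 0$, contradicting the independence of $F_1,\dotsc,F_p$. Full column rank means some $p$ rows, indexed by a set $S$ of size $p$, form an invertible submatrix $M_S$.

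Given such an $S$, the conditions $\mu_i(\lambda) = 0$ for $i \in S$ become the square system $M_S \lambda = -(c_{i,0})_{i \in S}$ with invertible coefficient matrix, so there is a unique solution $\lambda^* \in \bbC^p$. For this $\lambda^*$ all $\mu_i(\lambda^*)$ with $i \in S$ vanish, at most $r-p$ of the powers $\ell_i^{d-1}$ survive, and hence $\rk(F_0 + \sum \lambda_k^* F_k) \leq r-p$. Applying the hypothesis at $\lambda^*$ gives $m \leq r-p$, completing the argument.

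I expect the main obstacle to be the full-column-rank claim for $C$: it is the single point where the independence hypothesis is used, and it is what converts ``$p$ linearly independent derivatives'' into ``$p$ rows we are free to zero out.'' The remaining steps should be routine, though I would still want to confirm the differentiation normalization $\partial \ell_i^d/\partial x_k = d\,c_{i,k}\,\ell_i^{d-1}$ matches the paper's conventions, and note that the degenerate case where $F_0 + \sum \lambda_k^* F_k$ happens to vanish entirely is harmless, since the inequality $m \leq r-p$ is what we need and holds regardless.
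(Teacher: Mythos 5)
Your proof is correct, and it is essentially the original Carlini--Guo--Ventura coordinate argument; the paper takes the same underlying idea but packages it coordinate-free and more generally. Specifically, the paper first proves a small lemma --- every $p$-dimensional non-linear affine subspace $W \subseteq \Span\{v_1,\dotsc,v_r\}$ contains a point expressible as a combination of at most $r-p$ of the $v_i$ (by induction on $p$) --- and then applies it with $v_i = \ell_i^a$ to obtain Theorem~\ref{thm: cgv}, valid for every $1 \leq a \leq d-1$ and every non-linear affine subspace of $\Derivs(F)_a$; the stated theorem is the special case $a = d-1$ with $W = F_0 + \Span\{F_1,\dotsc,F_p\}$. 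Your explicit step --- $Ca = 0$ forces $\sum_k a_k F_k = 0$, so $C$ has full column rank, some $p \times p$ submatrix $M_S$ is invertible, and solving $M_S\lambda = -(c_{i,0})_{i \in S}$ kills $p$ of the coefficients $\mu_i$ --- is exactly the concrete content of the paper's inductive lemma, so the trade-off is: your version is self-contained and computationally transparent, while the paper's isolates the linear algebra as a reusable geometric lemma and gets the generalization to arbitrary $a$ (which it needs later, e.g.\ for the rank-$7$ ternary quartic with $a=3$ and the additivity theorem with $a=2$) at no extra cost. Two minor points you handled or should note: absorbing the nonzero scalars $d\,\mu_i(\lambda^*)$ into the linear forms is legitimate over $\bbC$ because $d-1 \geq 1$ (the statement implicitly assumes $d \geq 2$, since otherwise the $F_k$ are constants); and your closing observation about the degenerate case is right --- if the form at $\lambda^*$ vanishes, the hypothesis forces $m \leq 0$ while full column rank already gives $r \geq p$, so $r \geq m+p$ still holds.
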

Before anything else, we present a slight generalization with a simple proof.
Our proof is not substantially different from the proof of \cite{MR3506475}, but we give a more coordinate-free presentation.

In the following a \defining{linear subspace} is a subspace in the usual sense, passing through the origin.
An \defining{affine subspace} is a translation of a linear subspace; it may or may not pass through the origin.
A \defining{non-linear affine subspace} is an affine subspace that does not pass through the origin.

Recall the familiar fact that every affine line meets at least one coordinate hyperplane.
A slight generalization of this will be useful.
\begin{proposition}
Let $V \subseteq \Span\{v_1,\dotsc,v_r\}$.
Suppose $W \subseteq V$ is a $p$-dimensional non-linear affine subspace.
Then there is a $w \in W$ that can be written as a linear combination of at most $r-p$ of the $v_i$.
\end{proposition}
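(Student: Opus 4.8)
The plan is to introduce coordinates dual to the $v_i$, reinterpret the conclusion as meeting an intersection of coordinate hyperplanes, and then induct on $p$. First I would reduce to the case that $v_1,\dots,v_r$ are linearly independent: if they are not, I replace them by a maximal independent subset $v_{i_1},\dots,v_{i_s}$ with $s < r$, whose span still contains $V \supseteq W$; a point written in terms of at most $s-p$ of these is written in terms of at most $r-p$ of the original $v_i$. So I may assume $v_1,\dots,v_r$ form a basis, with dual coordinates $c_1,\dots,c_r$, so that every point of the span is $\sum c_i v_i$. Writing $W = a + U$ for its $p$-dimensional direction space $U$, the hypothesis that $W$ is non-linear says $0 \notin W$, i.e.\ $a \notin U$; hence $\Span(W) = U + \bbC a$ has dimension $p+1$, so in particular $p+1 \le r$ and the dimension counts below never go negative. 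The key reformulation is that ``$w$ is a linear combination of at most $r-p$ of the $v_i$'' means exactly that at least $p$ of the coordinates $c_i(w)$ vanish, i.e.\ that $w$ lies on at least $p$ of the coordinate hyperplanes $H_i = \{c_i = 0\}$. Thus the proposition becomes the assertion that $W$ meets some intersection of $p$ coordinate hyperplanes, which is the promised generalization of the familiar fact that an affine line meets a coordinate hyperplane.

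I would prove this by induction on $p$, the base case $p=0$ being trivial, as $W$ is then a single point and every point is a combination of at most $r$ of the $v_i$. For $p \ge 1$, since $U \ne 0$ and the coordinates $c_1,\dots,c_r$ have common kernel $0$, some $c_i$ restricts to a non-constant affine function on $W$, equivalently $c_i|_U \ne 0$. For that $i$, the function $c_i$ is surjective on $W$, so $W \cap H_i$ is a nonempty affine subspace of dimension $p-1$, and it remains non-linear because $0 \notin W$ forces $0 \notin W \cap H_i$. Viewing $W \cap H_i$ inside $H_i = \Span\{v_j : j \ne i\}$, a span of $r-1$ independent vectors, the inductive hypothesis produces a point expressible with at most $(r-1)-(p-1) = r-p$ of the $v_j$, $j \ne i$; that same point answers the original problem.

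The step I expect to be the main obstacle is the selection of a usable coordinate together with the verification that $W \cap H_i$ behaves as claimed: that it is nonempty, drops dimension by exactly one, and stays non-linear. All three hinge on the single fact that some $c_i$ is non-constant on $W$ (valid precisely because $p \ge 1$) together with $0 \notin W$. Once this is isolated and proved, the induction and the reduction to a basis are routine bookkeeping.
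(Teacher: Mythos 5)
Your proof is correct and takes essentially the same route as the paper, whose entire proof is the one-line sketch that the statement ``is immediate if $p=0$ and follows by induction for $p > 0$''; you are simply supplying the details of that induction, including the coordinate-hyperplane reformulation the paper itself gestures at when it introduces the proposition as a generalization of the fact that every affine line meets a coordinate hyperplane. Your reduction to independent $v_i$, the choice of a coordinate $c_i$ non-constant on $W$, and the checks that $W \cap H_i$ is a nonempty non-linear affine subspace of dimension $p-1$ are all sound, so nothing further is needed.
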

\begin{proof}
It is immediate if $p=0$ and follows by induction for $p > 0$.
\end{proof}

Now the Carlini--Guo--Ventura lower bound for Waring rank is the case $a=d-1$ of the following statement.
Let us denote $\minrank(W) = \min\{ \rk(G) : G \in W \}$.
\begin{theorem}\label{thm: cgv}
Let $F$ be a form of degree $d$.
Then for all $1 \leq a \leq d-1$,
\[
  \rk(F) \geq \max\{ \dim(W) + \minrank(W) : 0 \notin W \subset \Derivs(F)_a \} ,
\]
the maximum taken over all nonlinear affine subspaces.
\end{theorem}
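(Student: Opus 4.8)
The plan is to combine an optimal Waring decomposition of $F$ with the affine-subspace Proposition stated just above; the entire inequality is carried by that Proposition, and everything else is bookkeeping. First I would fix a Waring decomposition $F = \ell_1^d + \dotsb + \ell_r^d$ with $r = \rk(F)$. By the same induction that proves the classical catalecticant bound, $\Derivs(F)_a$ is contained in $\Span\{\ell_1^a,\dotsc,\ell_r^a\}$. This containment is the only structural fact about $\Derivs(F)_a$ that the argument needs: it realizes $\Derivs(F)_a$ as a subspace of the span of $r$ explicit vectors, namely $v_i = \ell_i^a$.

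Next, fix an arbitrary nonlinear affine subspace $W \subset \Derivs(F)_a$ with $0 \notin W$ and $\dim(W) = p$. Applying the Proposition with $V = \Derivs(F)_a$ and $v_i = \ell_i^a$ produces an element $w \in W$ that is a linear combination of at most $r - p$ of the $\ell_i^a$. Such an expression is a power sum decomposition of the $a$-form $w$ using at most $r-p$ terms, so $\rk(w) \leq r - p$, and hence $\minrank(W) \leq \rk(w) \leq r - p$. Rearranging gives $\rk(F) = r \geq \dim(W) + \minrank(W)$, and taking the maximum over all admissible $W$ yields the claimed bound.

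I do not expect a genuine obstacle here: the force of the argument lies entirely in the Proposition, whose job is precisely to convert the affine dimension $p$ of $W$ into a saving of $p$ summands needed to represent some member of $W$. The only points requiring care are checking that the containment $\Derivs(F)_a \subseteq \Span\{\ell_1^a,\dotsc,\ell_r^a\}$ matches the hypothesis $V \subseteq \Span\{v_1,\dotsc,v_r\}$ of the Proposition, and verifying the degenerate case $p=0$, where $W$ is a single nonzero $a$-form and the bound reduces to $\rk(F) \geq \rk(w)$. Finally, to confirm that this recovers the Carlini--Guo--Ventura bound one specializes to $a = d-1$, where $\Derivs(F)_{d-1}$ is spanned by the first partial derivatives and the relevant affine subspace is $\{F_0 + \sum_{k=1}^p \lambda_k F_k\}$, whose minimum rank is the quantity $m$.
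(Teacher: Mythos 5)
Your proof is correct and matches the paper's own argument essentially line for line: both fix a Waring decomposition $F = \ell_1^d + \dotsb + \ell_r^d$ with $r = \rk(F)$, use the containment $\Derivs(F)_a \subseteq \Span\{\ell_1^a,\dotsc,\ell_r^a\}$, and invoke the affine-subspace Proposition to find a point of $W$ expressible with at most $r - \dim(W)$ of the $\ell_i^a$, giving $\minrank(W) \leq r - \dim(W)$. No gaps; the extra remarks on the $p=0$ case and the specialization $a = d-1$ are correct but not needed.
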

\begin{proof}
If $F = \ell_1^d + \dotsb + \ell_r^d$ then $\Derivs(F)_a \subseteq \Span\{\ell_1^a,\dotsc,\ell_r^a\}$.
Every non-linear affine subspace $W \subset \Derivs(F)_a$ of dimension $p$ must contain a point which is a linear combination of
at most $r-p$ of the $\ell_i^a$, hence is a point of rank at most $r-p$.
Thus $\minrank(W) \leq r - \dim(W)$.
\end{proof}

\begin{remark}
The catalecticant bound is the special case where $W$ is an arbitrary non-linear affine hyperplane,
so $\dim W$ is equal to $\dim \Derivs(F)_a - 1$, and $\minrank(W) \geq 1$.
In particular, when equality holds in the catalecticant bound then equality holds in the bound of Theorem~\ref{thm: cgv}.
\end{remark}

Here are some examples where equality holds in the bound of Theorem~\ref{thm: cgv} but not in the catalecticant bound.
The first example is due to Carlini--Guo--Ventura and was in fact the motivating example for the development of their bound.

\begin{example}[\cite{MR3506475}]
Let $F = x(y_1^2 + \dotsb + y_{n-1}^2 + y_n x)$.
Let $F_0 = \partial F / \partial x = y_1^2 + \dotsb + y_{n-1}^2 + 2 y_n x$.
For $1 \leq i \leq n$ let $F_i = \partial F / \partial y_i$,
so $F_i = 2x y_i$ for $1 \leq i \leq n-1$ and $F_n = x^2$.
Note $F_0,\dotsc,F_n$ are linearly independent since no two of them share any monomials.
Let $W_0$ be the span of $F_1,\dotsc,F_n$ and let $W = F_0 + W_0$.
Elements of $W$ are of the form
\[
  y_1^2 + \dotsb + y_{n-1}^2 + x(2y_n + \lambda_1 y_1 + \dotsb + \lambda_{n-1} y_{n-1} + \lambda_n x)
\]
which can be rewritten as
\[
  y_1^2 + \dotsb + y_{n-1}^2 + xL
\]
where $y_1,\dotsc,y_{n-1},x,L$ are linearly independent linear forms.
So this quadratic form has rank $n+1$.
Hence $\rk F \geq 2n+1$.
It is easy to see that $\rk F \leq 2n+1$, see \cite{MR3506475}.
So $\rk F = 2n+1$ is determined and equality holds in the bound of Theorem~\ref{thm: cgv} with $a=2$.
\end{example}

\begin{example}\label{example: ternary quartics}
Kleppe \cite{Kleppe:1999fk} and De Paris \cite{MR3437175} showed that every ternary quartic has rank at most $7$,
and Kleppe showed that rank $7$ is attained by a ternary quartic of the form $y^2(x^2+yz)$,
consisting of a smooth conic plus a doubled tangent line.
We are in a position to give an alternative to Kleppe's proof that $F = y^2(x^2+yz)$ has rank $7$.
It is easy to see $\rk(F) \leq 7$ ($\rk(x^2 y^2) = 3$ and $\rk(y^3 z) = 4$).
Note that
\[
  \frac{\partial F}{\partial y} + \lambda \frac{\partial F}{\partial x} + \mu \frac{\partial F}{\partial z}
    = 2x^2 y + y^2(3z + 2\lambda x + \mu y) = 2x^2 y + y^2 L
\]
giving a $2$ dimensional affine space of ternary cubics of rank $5$.
So $\rk(F) \geq 7$.
Thus $\rk(F) = 7$ is determined and equality holds in the bound of Theorem~\ref{thm: cgv} with $a=3$.
\end{example}

\begin{remark}
The maximum ranks for ternary cubics, quartics, and quintics are each attained by binomials:
in the cubic case, it is well known that $x^2 y + y^2 z$ has rank $5$, the maximum;
for quartics, the binomial $x^2 y^2 + y^3 z$ is discussed in the example above;
and \cite[Theorem 18]{MR3536055} shows that the quintic binomial $x y z^3 + y^4 z$ attains the maximum rank.
Binomials are not the only forms that have the maximum value for rank.
Nevertheless it would be interesting to determine the ranks of all binomials.
\end{remark}

\begin{remark}\label{remark: other ternary quartics}
The above $x^2 y^2 + y^3 z$ is not the only ternary quartic of rank $7$; see \cite[Theorem~3.6]{Kleppe:1999fk},
which asserts that ternary quartics $F$ with a certain property have rank $7$.
The condition is that $(F^\perp)_2$ must be $1$-dimensional and spanned by the square of a linear form.
Kleppe does not give an example of such a form, and it is perhaps not immediately obvious that such a form exists.
For the above $F = y^2(x^2+yz)$, $(F^\perp)_2$ contains the square $\partial_z^2 = \tfrac{\partial^2}{\partial z^2}$,
but $(F^\perp)_2$ is $3$-dimensional.
Alessandro De Paris kindly pointed this out to me along with the (not unique) example $G = (x+y)^4 + (x^3 + y^3)z$.
It is easy to see that $\rk(G) \leq \rk((x+y)^4) + \rk(x^3 z - z^3) + \rk(y^3 z + z^3) = 1 + 3 + 3 = 7$.
And one can check that $(G^\perp)_2$ is spanned by $\partial_z^2 = \tfrac{\partial^2}{\partial z^2}$,
so Kleppe's theorem shows $\rk(G) = 7$.
Alternatively, $\rk(G) \geq 7$ follows from \cite[Corollary 8]{MR3536055}.
\end{remark}

One can easily check that equality does not hold in the catalecticant bound in the above examples.

\begin{theorem}
Let $d \geq 3$, let $F_1(\bx_1),\dotsc,F_k(\bx_k)$ be $d$-forms in independent tuples of variables $\bx_1,\dotsc,\bx_k$,
and let $F = F_1 + \dotsb + F_k$.
Suppose that for each $i$, $1 \leq i \leq k$,
equality holds in the bound of Theorem~\ref{thm: cgv} for $\rk(F_i)$ with $a=2$.
Then equality holds in the bound of Theorem~\ref{thm: cgv} for $\rk(F)$ with $a=2$,
and $\rk(F) = \rk(F_1) + \dotsb + \rk(F_k)$.
\end{theorem}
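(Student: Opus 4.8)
The plan is to instantiate the general $A$/$B$ strategy from the opening of Section~\ref{section: additivity of Waring rank}, taking $A = \rk$ and letting $B$ be the bound of Theorem~\ref{thm: cgv} specialized to $a=2$,
\[
  B(F) = \max\{\dim(W) + \minrank(W) : 0 \notin W \subset \Derivs(F)_2\}.
\]
Condition $A \ge B$ is Theorem~\ref{thm: cgv} itself, and subadditivity of $\rk$ was observed in the introduction, so the whole theorem reduces to one assertion: $B$ is superadditive on forms in independent variables. I would stress that superadditivity alone suffices, even though the framework is phrased with additivity: once we know $B(F+G) \ge B(F)+B(G)$, the chain
\[
  \rk(F) \ge B(F) \ge \sum_i B(F_i) = \sum_i \rk(F_i) \ge \rk(F)
\]
(with the middle equality from the hypothesis $\rk(F_i) = B(F_i)$ and the last inequality from subadditivity) collapses to equalities, giving at once $\rk(F) = B(F)$, i.e.\ equality in the bound for $F$, and $\rk(F) = \sum_i \rk(F_i)$.

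To establish superadditivity for two forms $F, G$ in independent variables, I would first recall from the catalecticant argument that $\Derivs(F+G)_2 = \Derivs(F)_2 \oplus \Derivs(G)_2$. Choosing non-linear affine subspaces $W_F \subset \Derivs(F)_2$ and $W_G \subset \Derivs(G)_2$ that attain $B(F)$ and $B(G)$, I set $W = W_F + W_G$. Because the two summands lie in complementary subspaces, $W$ is an affine subspace of $\Derivs(F+G)_2$ with $\dim W = \dim W_F + \dim W_G$, and it is non-linear: $w_F + w_G = 0$ forces $w_F = w_G = 0$ by directness, contradicting $0 \notin W_F$. Hence $W$ is an admissible competitor in the maximum defining $B(F+G)$.

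The crux, and the one place where $a=2$ is genuinely needed, is computing $\minrank(W)$. Every element of $W$ has the form $w_F + w_G$ with $w_F$ a quadratic form in $F$'s variables and $w_G$ a quadratic form in $G$'s variables, the two variable sets being disjoint; its Gram matrix is block diagonal, so its Waring rank---which for a quadric equals the rank of that matrix---is $\rk(w_F) + \rk(w_G) \ge \minrank(W_F) + \minrank(W_G)$. Minimizing over $W$ yields $\minrank(W) \ge \minrank(W_F) + \minrank(W_G)$, and therefore $B(F+G) \ge \dim W + \minrank(W) \ge B(F) + B(G)$; an easy induction passes from two summands to $k$. I expect this rank-additivity for the degree-$2$ elements of $\Derivs$ to be the main obstacle in spirit: it is elementary for quadrics via block-diagonal Gram matrices, but for general $a$ it would amount to assuming additivity of Waring rank---exactly what Question~\ref{q: rank additive} asks---so the restriction to $a=2$ is essential.
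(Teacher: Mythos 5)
Your proposal is correct and takes essentially the same approach as the paper: the paper's proof likewise forms $W = W_1 \times \dotsb \times W_k$ inside $\Derivs(F)_2 = \bigoplus_i \Derivs(F_i)_2$ (your Minkowski sum $W_F + W_G$, extended by induction), uses additivity of rank for quadratic forms in independent variables to bound $\minrank(W)$ from below by $\sum_i \minrank(W_i)$, and closes the same chain $\rk(F) \geq \dim W + \minrank(W) \geq \sum_i \rk(F_i) \geq \rk(F)$. Your explicit verifications (non-linearity of $W$ via directness, block-diagonal Gram matrices) and your closing remark that general $a$ would require rank additivity in degree $a$ simply make explicit what the paper leaves implicit or states immediately after its proof.
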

\begin{proof}
For each $i$ let $W_i \subset \Derivs(F_i)_2$ be a $p_i$-dimensional nonlinear affine subspace and let $m_i$ be an integer such that
$\rk(G) \geq m_i$ for all $G \in W_i$, and $m_i + p_i = \rk(F_i)$.
Let $W = W_1 \times \dotsb \times W_k \subset \Derivs(F)_2$.
Let $p = \dim W = \sum \dim W_i = \sum p_i$.
Suppose $G = \sum G_i \in W$, each $G_i \in W_i$.
Since the $G_i$ are quadratic forms in independent variables $\bx_i$,
we have $\rk(G) = \sum \rk(G_i)$. 
So $\rk(G) \geq m = \sum m_i$.
Hence $\rk(F) \geq m + p = \sum (m_i + p_i) = \sum \rk(F_i) \geq \rk(F)$.
\end{proof}

If the symmetric version of Strassen's additivity conjecture holds for all forms in degree $a$,
then $2$ in the above theorem can be replaced by $a$.

\subsection{Singularities}
In this section we observe that
additivity of Waring ranks is implied by equality in a lower bound for Waring rank in terms of singularities
given in \cite{MR2628829}.

For $F$ and for $0 \leq a \leq d-1$, we let
$\Sigma_a(F) = \{ p \in T_1 : \mult_p(F) > a \}$,
the set of points at which $F$ vanishes with multiplicity strictly greater than $a$.
This is an algebraic set defined by the common vanishing of the $a$th derivatives of $F$,
but we will ignore the scheme structure.
Note $\Sigma_0(F)$ is the affine cone over the hypersurface defined by $F$
and $\Sigma_1(F)$ is the singular locus of this cone.
Recall that $F$ is called \defining{concise with respect to $\bx$}, or simply \defining{concise},
if $F$ cannot be written as a form in fewer variables,
even after a linear change of coordinates: explicitly, if $F \in S^d W$ for some $W \subseteq V$, then $W = V$.
The following are equivalent: $F$ is concise; the projective hypersurface defined by $F$ is not a cone;
$\Sigma_{d-1}(F)$ is supported only at the origin in $T_1$;
$(F^\perp)_1 = 0$; $\Derivs(F)_1 = S_1$.
See \cite{MR2279854}, \cite[\textsection2.2]{MR3426613}.
Note that in the context of practical computation the last two conditions are easily checked by linear algebra.

Now the lower bound for Waring rank in terms of singularities is the following.
\begin{theorem}[\cite{MR2628829}]
Suppose $F \in S^d V$ is concise and $1 \leq a \leq d-1$.
Then $\rk(F) \geq \dim \Derivs(F)_{d-a} + \dim \Sigma_a(F)$.
\end{theorem}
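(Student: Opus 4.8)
The plan is to refine the catalecticant bound by showing that each point of $\Sigma_a(F)$ forces a linear dependence among the $a$-th powers $\ell_i^a$ occurring in a decomposition, and that these dependences sweep out a space of dimension at least $\dim \Sigma_a(F)$. First I would fix a Waring decomposition $F = \ell_1^d + \dotsb + \ell_r^d$ with $r = \rk(F)$. Recall the catalecticant symmetry $\dim \Derivs(F)_a = \dim \Derivs(F)_{d-a} =: m$, and that every order-$(d-a)$ derivative of $\ell_i^d$ is a scalar multiple of $\ell_i^a$, so $\Derivs(F)_a \subseteq \Span\{\ell_1^a,\dotsc,\ell_r^a\} =: W$ and hence $\dim W \geq m$. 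Writing $R = \ker\big(\bbC^r \to S_a,\ (c_i) \mapsto \sum_i c_i \ell_i^a\big)$ for the space of linear relations among the $\ell_i^a$, we have $\dim R = r - \dim W \leq r - m$. It therefore suffices to prove $\dim \Sigma_a(F) \leq \dim R$.

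The bridge is an elementary identity: for $z \in T_1$ with $z \neq 0$, one has $\mult_z(F) > a$ if and only if $\partial_z^{\,d-a} F = 0$ as a form of degree $a$, where $\partial_z$ is the directional derivative along $z$. Indeed, choosing coordinates with $z = \partial_n$ reduces this to $\mult_z(F) > a \iff \deg_{x_n} F < d-a \iff \partial_n^{d-a} F = 0$. Since $\partial_z^{\,d-a} F = \tfrac{d!}{a!}\sum_i \ell_i(z)^{d-a}\,\ell_i^a$, each point $z \in \Sigma_a(F)$ yields precisely the relation $\sum_i \ell_i(z)^{d-a}\,\ell_i^a = 0$; that is, the vector $(\ell_i(z)^{d-a})_i$ lies in $R$.

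This produces a morphism $\Sigma_a(F) \to R$, $z \mapsto (\ell_i(z)^{d-a})_i$, and the remaining step is to check that it does not collapse dimension. It factors as the linear map $z \mapsto (\ell_i(z))_i$, which is injective because conciseness forces the $\ell_i$ to span $V$, followed by the coordinatewise $(d-a)$-th power map $\bbC^r \to \bbC^r$, which is finite. A finite morphism restricted to the closed cone $\Sigma_a(F)$ preserves dimension, so its image has dimension $\dim \Sigma_a(F)$; being contained in $R$, this gives $\dim \Sigma_a(F) \leq \dim R \leq r - m$, that is, $\rk(F) \geq \dim \Derivs(F)_{d-a} + \dim \Sigma_a(F)$.

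The main obstacle is exactly this last step, bounding $\dim \Sigma_a(F)$ from \emph{above}: naive fiber-dimension estimates for $\Sigma_a(F) = \{z : \partial_z^{\,d-a}F = 0\}$ only bound it from below. The key point is that conciseness together with the finiteness of the power map allow the singular locus to embed, with no loss of dimension, into the relation space $R$, whose dimension the catalecticant inequality already pins at $\leq r - m$. Everything else is routine linear algebra and the standard fact that taking $(d-a)$-th powers coordinatewise is a finite morphism.
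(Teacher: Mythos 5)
Your proof is correct, and since the paper only quotes this bound from \cite{MR2628829} without reproving it, the right comparison is with the original Landsberg--Teitler argument, which yours essentially reconstructs: fix a minimal decomposition $F = \sum \ell_i^d$, send a point $z$ of the affine cone $\Sigma_a(F)$ to the relation vector $(\ell_i(z)^{d-a})_i$ in the kernel of $(c_i) \mapsto \sum c_i \ell_i^a$, use conciseness to make $z \mapsto (\ell_i(z))_i$ injective, and use finiteness of the coordinatewise power map to see the image has dimension $\dim \Sigma_a(F)$ inside a space of dimension at most $r - \dim \Derivs(F)_{d-a}$. All the supporting steps check out, including the equivalence $\mult_z(F) > a \iff \partial_z^{\,d-a}F = 0$, the catalecticant symmetry $\dim \Derivs(F)_a = \dim \Derivs(F)_{d-a}$, and your use of the affine locus $\Sigma_a(F) \subset T_1$, which matches the paper's normalization (it notes the cited source's projective convention accounts for a $+1$).
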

\noindent
(In \cite{MR2628829} the notation $\Sigma_a(F)$ refers to the projective locus which leads to a $+1$ in the statement of the bound.)
Here are some forms $F$ which attain equality in this bound:
\begin{enumerate}
\item $x_1 y_1 z_1 + \dotsb + x_n y_n z_n$ \cite[Proposition 7.1]{MR2628829}
\item $x(y_1^2 + \dotsb + y_n^2)$ and $x(y_1^2 + \dotsb + y_n^2 + x^2)$ \cite[Proposition 7.2]{MR2628829}
\end{enumerate}
When equality holds, Strassen's additivity conjecture follows.
\begin{theorem}
Suppose that $\bx_1,\dotsc,\bx_k$ are independent tuples of variables.
For each $i$, $1 \leq i \leq k$, let $V_i$ be the vector space with basis $\bx_i$.
For each $i$, $1 \leq i \leq k$, let $F_i(\bx_i)$ be a $d$-form which is concise with respect to $\bx_i$.
Let $F = F_1 + \dotsb + F_k$ and let $V = \bigoplus_{i=1}^k V_i$.
We consider $\Sigma_a(F_i) \subset V_i$ for each $i$ and $\Sigma_a(F) \subset V$.

Let $1 \leq a \leq d-1$
be such that for each $i$,
\[
  \rk(F_i) = \dim \Derivs(F_i)_{d-a} + \dim \Sigma_a(F_i) .
\]
Then $F = F_1 + \dotsb + F_k$ is concise with respect to $\bx = (\bx_1,\dotsc,\bx_k)$
and
\[
  \rk(F) = \dim \Derivs(F)_{d-a} + \dim \Sigma_a(F) = \sum_{i=1}^k \rk(F_i).
\]
\end{theorem}
\begin{proof}
Note that $\Sigma_a(F)$ is the Cartesian product of the $\Sigma_a(F_i)$.
So $\dim \Sigma_a(\sum F_i) = \sum \dim \Sigma_a(F_i)$.
The rest is just as before.
\end{proof}

\subsection{Other ranks}
See \cite{Teitler:2014gf} for an exposition of other ranks which are generalizations of Waring rank or variations on Waring rank,
such as simultaneous rank and multihomogeneous rank.
In each case the notion of rank is subadditive and has various lower bounds that are additive on forms (or linear spaces of forms)
in independent variables; we leave it to the reader to formulate sufficient conditions for the analogous additivity conjectures
for these generalized ranks, similar to the above results.

\section{Cactus rank}\label{section: cactus rank}

We recall the notion of cactus rank and
give a sufficient condition for a positive answer to the analogous version of Question~\ref{q: decompositions additive}.

\begin{definition}
Let $F \in S_d$ be a $d$-form.
A closed scheme $Z \subset \bbP V = \bbP S_1$ is \defining{apolar to $F$}
if the saturated ideal $I = I(Z)$ satisfies $I \subseteq F^\perp$.
\end{definition}
\begin{definition}
For a closed subscheme in projective space $X \subseteq \bbP W$
the \defining{span} of $X$, denoted $\Span(X) \subseteq \bbP W$, is the reduced projective linear subspace spanned by $X$,
that is, the smallest reduced projective linear subspace containing $X$ as a subscheme.
Equivalently, $\Span(X)$ is the projective linear subspace defined by the vanishing of the linear forms in the ideal $I(X)$.
We write $\Span'(X) \subseteq W$ for the affine cone over $\Span(X)$.
\end{definition}

The Apolarity Lemma states that $Z$ is apolar to $F$ if and only if the point $[F] \in \bbP S_d$
lies in the linear span of the scheme $\nu_d(Z)$, where $\nu_d : \bbP S_1 \to \bbP S_d$ is the Veronese map.
See, for example, \cite[pg.~280]{Pascal:1910sf} (and references therein),
\cite[Theorem 5.3]{MR1735271}, \cite[\textsection1.3]{MR1780430}, \cite[\textsection4.1]{Teitler:2014gf}.

When $Z = \{[\ell_1],\dotsc,[\ell_r]\}$ is a reduced zero-dimensional scheme
it follows that $Z$ is apolar to $F$ if and only if $F = \sum c_i \ell_i^d$ for some scalars $c_i$.
So the Waring rank $\rk(F)$ is equal to the minimum degree of a reduced zero-dimensional apolar scheme to $F$.

\begin{definition}
The \defining{cactus rank} of a form $F$, denoted $\crk(F)$, is the least degree of a zero-dimensional apolar scheme to $F$,
see \cite{MR2842085}.
This is also called the \defining{scheme length} of $F$, see \cite[Definition~5.1, Definition~5.66]{MR1735271}.
\end{definition}
Recall that cactus rank is bounded below by $\dim \Derivs(F)_a$ and is subadditive:
$\crk(F) \geq \dim \Derivs(F)_a$ and $\crk(F_1 + F_2) \leq \crk(F_1)+\crk(F_2)$.
Briefly, if $Z$ is apolar to $F$ then $\deg(Z) \geq \dim(T/I)_a \geq \dim (T/F^\perp)_a = \dim \Derivs(F)_a$,
and if $Z_i$ is apolar to $F_i$ for $i=1,2$, then $Z_1 \cup Z_2 \subset \bbP V_1 \cup \bbP V_2 \subset \bbP(V_1 \oplus V_2)$
is apolar to $F_1 + F_2$.

Evidently $\rk(F) \geq \crk(F)$.
So $\crk(F) = \dim \Derivs(F)_a$ occurs at least as often as the equality $\rk(F) = \dim \Derivs(F)_a$.
See Example~\ref{example: concentrated monomials cactus rank} for monomials $F$ such that
$\rk(F) > \crk(F) = \dim \Derivs(F)_a$.

\begin{theorem}\label{theorem: cactus rank}
Suppose that $\bx_1,\dotsc,\bx_k$ are independent tuples of variables.
For each $i$, $1 \leq i \leq k$, let $V_i$ be the vector space with basis $\bx_i$.
For each $i$, $1 \leq i \leq k$, let $F_i(\bx_i)$ be a nonzero $d$-form in the variables $\bx_i$,
or equivalently $0 \neq F_i \in S^d V_i$.
Let $F = F_1 + \dotsb + F_k$ and let $V = \bigoplus_{i=1}^k V_i$.

Suppose that there exists an integer $1 \leq a \leq d-1$ such that for each $i$, $\crk(F_i) = \dim \Derivs(F_i)_a$.
Then
\[
  \crk(F) = \dim \Derivs(F)_a = \sum_{i=1}^k \crk(F_i) .
\]

If in addition $a \geq 2$, then furthermore every zero-dimensional apolar scheme to $F$ of degree $\crk(F)$
is a union of apolar schemes to the $F_i$.
That is, if $Z \subset \bbP V$ is any zero-dimensional apolar scheme to $F$ of degree $\deg(Z) = \crk(F)$,
then $Z$ is a union of apolar schemes to the $F_i$, meaning that $Z = \bigcup_{i=1}^k Z_i$,
where each $Z_i \subset \bbP V_i \subset \bbP V$ is apolar to $F_i$.
\end{theorem}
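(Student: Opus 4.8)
The plan is to run the same $A/B$ strategy as in the Waring-rank theorem, now with $A = \crk$ and $B = \dim\Derivs(\cdot)_a$, and then to upgrade the first (numerical) conclusion to the scheme-theoretic one using the apolarity formulation. First I would record the two structural facts in the independent-variables setting. On one side, $\Derivs(F)_a = \bigoplus_i \Derivs(F_i)_a$, so $\dim\Derivs(\cdot)_a$ is additive. On the dual side, every degree-$a$ differential operator involving variables from at least two of the tuples annihilates $F = \sum_i F_i$: such an operator carries, for each $i$, a derivative in some tuple $j \neq i$, which kills $F_i$. Writing $M_a \subseteq T_a$ for the span of these ``mixed'' monomials, this says $M_a \subseteq (F^\perp)_a$. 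Combining the cactus catalecticant bound $\crk \geq \dim\Derivs(\cdot)_a$ with subadditivity exactly as before gives $\crk(F) \geq \dim\Derivs(F)_a = \sum_i \dim\Derivs(F_i)_a = \sum_i \crk(F_i) \geq \crk(F)$, forcing equality throughout and proving the first assertion.

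For the second assertion, let $Z$ be a zero-dimensional apolar scheme with $\deg Z = \crk(F)$ and $I = I(Z) \subseteq F^\perp$. Running the chain through $\deg Z \geq \dim(T/I)_a \geq \dim(T/F^\perp)_a = \dim\Derivs(F)_a = \crk(F) = \deg Z$ shows $\dim(T/I)_a = \dim(T/F^\perp)_a$; since $I_a \subseteq (F^\perp)_a$, this forces $I_a = (F^\perp)_a$, and in particular $M_a \subseteq I$. The crux is then to promote this to the \emph{scheme-theoretic} containment $Z \subseteq \bigcup_i \bbP V_i$. Since $I$ is saturated, $(M_a)^{\mathrm{sat}} \subseteq I$, and a short monomial computation shows $(M_a)^{\mathrm{sat}}$ contains every degree-$2$ mixed monomial $\partial^{(i)}_s \partial^{(j)}_t$: for such a $\mu$ one has $\mathfrak{m}^{a-2}\mu \subseteq M_a$ (every degree-$a$ multiple of a mixed monomial is mixed), so $\mu \in (M_a)^{\mathrm{sat}}$. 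Hence $(M_a)^{\mathrm{sat}}$ contains the ideal $I(\bigcup_i \bbP V_i)$, which is generated by the degree-$2$ mixed monomials, and $Z \subseteq \bigcup_i \bbP V_i$ scheme-theoretically. Equivalently and more transparently, at a support point $[\ell] \in \bbP V_n$ (where some $\partial^{(n)}_{s_0}$ does not vanish at $\ell$) the element $(\partial^{(n)}_{s_0})^{a-1}\partial^{(j)}_t \in M_a \subseteq I$ dehomogenizes in the chart $\partial^{(n)}_{s_0} \neq 0$ to the coordinate $\partial^{(j)}_t/\partial^{(n)}_{s_0}$ cutting out $\bbP V_n$, so all transverse coordinates lie in $I_{[\ell]}$ and $Z$ is locally contained in $\bbP V_n$. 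This is exactly the step that uses $a \geq 2$, replacing the elementary ``$\ell_j$ has no mixed terms'' argument of the reduced case.

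Given the containment, the $\bbP V_i$ are pairwise disjoint, so $Z$ is the disjoint union of the closed subschemes $Z_i := Z \cap \bbP V_i \subseteq \bbP V_i$. Finally I would verify each $Z_i$ is apolar to $F_i$ using the span form of the Apolarity Lemma: from $Z_i \subseteq \bbP V_i$ we get $\langle \nu_d(Z_i) \rangle \subseteq S^d V_i$, so $\langle \nu_d(Z) \rangle = \sum_i \langle \nu_d(Z_i) \rangle$ sits inside the internal direct sum $\bigoplus_i S^d V_i$; since $[F] \in \langle \nu_d(Z) \rangle$ and the $S^d V_i$-component of $F = \sum_j F_j$ is exactly $F_i$, we conclude $[F_i] \in \langle \nu_d(Z_i) \rangle$, i.e.\ $Z_i$ is apolar to $F_i$. (As a check, $\deg Z_i \geq \crk(F_i)$ together with $\sum_i \deg Z_i = \deg Z = \crk(F) = \sum_i \crk(F_i)$ forces $\deg Z_i = \crk(F_i)$, so each piece is itself minimal.) The main obstacle is the middle paragraph: unlike the reduced Waring case, a minimal apolar scheme can a priori be nonreduced with tangent or higher-order directions pointing out of the coordinate subspaces, and the content of the argument is that the mixed degree-$a$ elements of $F^\perp$ — present precisely because $a \geq 2$ — obstruct exactly these transverse directions.
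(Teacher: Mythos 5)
Your proposal is correct and follows essentially the same route as the paper: the identical catalecticant chain for the numerical statement, the identical saturation mechanism to pass from degree-$a$ to degree-$2$ mixed equations (you saturate just the mixed monomials $M_a$, while the paper deduces $(F^\perp)_e \subseteq I$ for all $e \leq a$ and then uses exactly the mixed quadrics $V_i^* V_j^*$), and the identical Veronese-span direct-sum argument to conclude each $Z_i$ is apolar to $F_i$. Your local-chart dehomogenization is just a reformulation of that same saturation step, so nothing is genuinely different.
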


\begin{proof}
The first statement follows just as in previous theorems.
Briefly,
\begin{multline*}
  \crk(F) \leq \sum \crk(F_i) = \sum \dim \Derivs(F_i)_a \\
    = \dim \Derivs(F)_a \leq \crk(F) ,
\end{multline*}
because $\Derivs(F)_a = \bigoplus \Derivs(F_i)_a$, for $1 \leq a \leq d-1$.

Now suppose that $a \geq 2$.
And suppose that $Z \subset \bbP V$ is apolar to $F$ and $r = \deg(Z) = \crk(F)$.

For each $i$, $1 \leq i \leq k$, let $\bpartial_i$ be a tuple of dual variables to $\bx_i$.
Concretely, if $\bx_i = (x_{i,1},\dotsc,x_{i,n_i})$
then $\bpartial_i = (\partial_{i,1},\dotsc,\partial_{i,n_i})$
where $\partial_{i,j}$ acts as $\partial/\partial x_{i,j}$.
We identify the dual space $V_i^*$ with the subspace of $T_1$ spanned by $\bpartial_i$.
We write $V_i^\perp = \bigoplus_{j \neq i} V_j^*$.
Note that the variety in $\bbP V = \bbP S_1$ defined by $V_i^\perp$ is $\Zeros(V_i^\perp) = \bbP V_i$.

Let $I = I(Z)$.
Since $I$ is a one-dimensional ideal, $r = \dim(T/I)_e$ for $e \gg 0$.
Since $I$ is saturated, $\dim(T/I)_0 \leq \dim(T/I)_1 \leq \dotsb$.
So $r \geq \dim(T/I)_a$.
From $I \subset F^\perp$ we get $\dim(T/I)_a \geq \dim(T/F^\perp)_a$.
And we have just seen that $\dim(T/F^\perp)_a = \dim \Derivs(F)_a = r$.
It follows that $\codim I_a = \codim (F^\perp)_a$.
But $I_a \subseteq (F^\perp)_a$ by apolarity, hence $I_a = (F^\perp)_a$.
Again because $I$ is saturated, it follows that $(F^\perp)_e \subset I$ for all $e \leq a$.
In particular $(F^\perp)_2 \subset I$.
And finally for all $1 \leq i < j \leq k$, $V_i^* V_j^* \subset (F^\perp)_2 \subset I$.
Note that $\Zeros(\sum_{1 \leq i < j \leq k} V_i^* V_j^*) = \bigcup_{i=1}^k \bbP V_i \subset \bbP V$.
This shows $Z$ is contained in the disjoint union $\bigcup_{i=1}^k \bbP V_i$.

For $1 \leq i \leq k$ let $Z_i = Z \cap \bbP V_i$.
Then $Z = \bigcup_{i=1}^k Z_i$, a disjoint union.

All that is left is to verify that each $Z_i$ is apolar to $F_i$.
Since $V = \bigoplus_{i=1}^k V_i$, then $S^d V_1 \oplus \dotsb \oplus S^d V_k \subset S^d V$ is a direct sum.
The containment $Z_i \subset \bbP V_i$ means $\Span(\nu_d(Z_i)) \subseteq \Span(\nu_d(\bbP V_i)) = \bbP S^d V_i$.
It follows that $\Span'(\nu_d(Z_1)) + \dotsb + \Span'(\nu_d(Z_k)) = \Span'(\nu_d(Z))$ is a direct sum.
Since $F \in \Span'(\nu_d(Z))$, there is a unique decomposition $F = \sum F'_i$ where $F'_i \in \Span'(\nu_d(Z_i)) \subset S^d V_i$.
Since the $S^d V_i$ are a direct sum, it must be $F'_i = F_i$.
So $[F_i] \in \Span(\nu_d(Z_i))$, that is, the schemes $Z_i$ are apolar to $F_i$, for each $i$.
This shows that $Z$ is a disjoint union of schemes apolar to the $F_i$, as desired.
\end{proof}

\begin{example}\label{example: concentrated monomials cactus rank}
Let $M = x_1^{a_1} \dotsm x_n^{a_n}$ be a monomial with $0 < a_1 \leq \dotsb \leq a_n$.
The Waring rank $\rk(M) = (a_2+1) \dotsm (a_n+1)$ was found by Carlini--Catalisano--Geramita \cite{MR2966824},
who also found that sums of monomials in independent variables satisfy the symmetric Strassen additivity conjecture.
(See also \cite{MR3017012}.)
Slightly earlier, the cactus rank $\crk(M) = (a_1+1)\dotsm(a_{n-1}+1)$ was found by Ranestad--Schreyer \cite{MR2842085}.
But they did not consider the cactus rank of a sum of monomials in independent variables,
and in fact this value is not known in general.
We can now answer this for the special case of ``concentrated'' monomials (defined below).

(Note that if $n=1$ so that $M = x^d$, then $\rk(M) = \crk(M) = 1$
and the given expressions for both Waring rank and cactus rank remain valid when they are interpreted as empty products.)

Say that a monomial $M$ with notation as above is \defining{concentrated} if $a_1 + \dotsb + a_{n-1} \leq a_n$,
equivalently if $a_n \geq \frac{d}{2}$, where $d = \deg(M)$.
Every monomial in $1$ or $2$ variables is concentrated.
One can check that for $a_1 + \dotsb + a_{n-1} \leq \delta \leq a_n$ we have
$\dim \Derivs(M)_{\delta} = (a_1+1) \dotsm (a_{n-1}+1) = \crk(M)$
(see \cite[Lemma 11.4]{MR2628829}).
In particular if $M$ is concentrated then $\crk(M) = \dim \Derivs(M)_{a}$ for $a = \lfloor \frac{d}{2} \rfloor$.

Thus if $F_1,\dotsc,F_k$ are concentrated monomials of degree $d$ in independent variables it follows that
$\crk(\sum F_i) = \sum \crk(F_i)$ and every zero-dimensional apolar scheme to $\sum F_i$
is a union of apolar schemes to the individual $F_i$.
\end{example}

\begin{example}\label{example: concentrated monomials border rank}
Recall that the \defining{border rank} $\brk(F)$ of a form is the least $r$ such that $[F]$ lies
in the Zariski closure of the locus of forms of Waring rank at most $r$, that is, the $r$th secant variety of the Veronese;
see, for example, \cite{MR2628829}, \cite[Definition 5.66]{MR1735271}.
In general border rank does not necessarily correspond to the length of any apolar scheme; see \cite{MR3333949}
for an example of a form $F$ such that $\brk(F) < \crk(F)$.

It is known that $\brk(F) \geq \dim \Derivs(F)_a$ for $0 \leq a \leq d$,
see, for example, \cite[Proposition 5.67]{MR1735271}, and that $\brk(F+G) \leq \brk(F) + \brk(G)$.
It immediately follows that if $F_1,\dotsc,F_k$ are $d$-forms in independent variables and $0 \leq a \leq d$ is such that
$\brk(F_i) = \dim \Derivs(F_i)_a$ for each $i$, then $\brk(\sum F_i) = \sum \brk(F_i)$.

It has been known for some time that for a monomial $M$,
\[
  \brk(M) \leq \crk(M) = (a_1+1)\dotsm(a_{n-1}+1),
\]
see \cite[Theorem 11.2]{MR2628829},
and that equality holds when $M$ is concentrated \cite[Theorem 11.3]{MR2628829}.
In particular if $F_1,\dotsc,F_k$ are concentrated monomials of degree $d$ in independent variables then
$\brk(\sum F_i) = \sum \brk(F_i) = \sum \crk(F_i)$.

However, Oeding has very recently shown that $\brk(M) = (a_1+1)\dotsm(a_{n-1}+1)$
for every monomial $M$ \cite{Oeding:2016kl}.
While it is not explicitly stated in \cite{Oeding:2016kl}, it seems to be the case that Oeding's technique
extends to show that if $F_1,\dotsc,F_k$ are any monomials of degree $d$ in independent variables then
$\brk(\sum F_i) = \sum \brk(F_i)$.
\end{example}

\section*{Acknowledgments}

I would like to thank Hailong Dao, Luke Oeding, and Kristian Ranestad for several helpful comments.
I am especially grateful to Jaros{\l}aw Buczy\'nski, Alessandro De Paris, Pedro Macias Marques, and Elisa Postinghel
for substantial and generous suggestions including the ideas behind
Proposition~\ref{proposition: converse}, Corollary~\ref{corollary: extension}, Remark~\ref{remark: other ternary quartics},
and simplifications in the proof of Theorem~\ref{theorem: cactus rank}.

\bigskip
\bibliographystyle{amsplain}
\renewcommand{\MR}[1]{}

\end{document}